\newcommand{\op}{\llbracket}%{[[}
\newcommand{\cl}{\rrbracket}%{]]}
\def\pv#1{\ensuremath{\mathsf{#1}}}
\def\Om#1#2{\ensuremath{\overline{\Omega}_{#1}{\pv{#2}}}}
\def\oms#1#2{\ensuremath{{\Omega}^\sigma_{#1}{\pv{#2}}}}
\def\omo#1#2{\ensuremath{{\Omega}^\omega_{#1}{\pv{#2}}}}
\newcommand{\Sub}[1]{{\sf Sub}(#1)}
\newtheorem{Thm}{Theorem}%[section]
\newtheorem{Prop}[Thm]{Proposition}
\newtheorem{Lemma}[Thm]{Lemma}
\newtheorem{Cor}[Thm]{Corollary}
\begin{document}
%%%%%%%%%%%%%%%%%%%%%%%%%%%%%%%%%%%%%%%%%%%%%%%%%%%%%%%%%%%%%\svnInfo $Id: ggm.tex 976 2012-03-09 08:42:43Z jalmeida $ 

\title[Reducibility vs. definability]{Reducibility versus definability for pseudovarieties of
  semigroups}

\author{J. Almeida}%
%\thanks{(J. Almeida) Research funded by the European Regional
%  Development Fund, through the programme COMPETE, and by the
%  Portuguese Government through FCT -- Fundação para a Ciência e a
%  Tecnologia, under the project PEst-C/MAT/UI0144/2011}%
\address{CMUP, Dep.\ Matemática, Faculdade de Ciências, Universidade do
  Porto, Rua do Campo Alegre 687, 4169-007 Porto, Portugal}
\email{jalmeida@fc.up.pt}

\author{O. Kl\'\i ma}%
%\thanks{(O. Klíma) The author was supported by the Grant Agency of the
%  Czech Republic, grant No. 201/01/0323.}%
\address{Dept.\ of Mathematics and Statistics, Masaryk University,
  Kotlá\v rská 2, 611 37 Brno, Czech Republic}%
\email{klima@math.muni.cz}

\begin{abstract}
  It is easy to show that a pseudovariety which is reducible with
  respect to an implicit signature $\sigma$ for the equation $x=y$ can
  also be defined by $\sigma$-identities. We present
  several negative examples for the converse using signatures in which
  the pseudovarieties are usually defined. An ordered example issue
  from the extended Straubing-Thérien hierarchy of regular languages
  is also shown to provide a positive example for the inequality $x\le
  y$.
\end{abstract}

\keywords{pseudovariety, relatively free profinite semigroup, ordered
  semigroup, completely regular semigroup, commutative semigroup, group}

\makeatletter%
\@namedef{subjclassname@2010}{%
  \textup{2010} Mathematics Subject Classification}%
\makeatother

\subjclass[2010]{Primary 20M07}

\maketitle

%\tableofcontents

\section{Introduction}
\label{sec:intro}

Drawing motivation and problems from theoretical computer science,
specially from the theory of finite automata and regular languages,
the study of finite semigroups has led to substantial developments
since the 1960's. The connections between the two areas were
formalized in seminal work of Eilenberg
\cite{Eilenberg:1974,Eilenberg:1976} where, in particular, the
relevant classification of finite semigroups that emerged is in terms
of the so-called pseudovarieties. Indeed, through Eilenberg's
correspondence, pseudovarieties of semigroups are associated, via
syntactical recognition, to classes of languages (varieties) with
natural closure properties. Several combinatorial operations on
varieties of languages have been shown to correspond to algebraic
constructions on pseudovarieties of semigroups and the general aim is
to decide membership in a variety by deciding membership in the
corresponding pseudovariety. Running through this general program,
pseudovarieties of semigroups are often defined as smallest
pseudovarieties generated by a given class of semigroups, constructed
by applying some algebraic operator on semigroups from given
pseudovarieties. While this process does not in general preserve
decidability of the membership problem
\cite{Albert&Baldinger&Rhodes:1992, Auinger&Steinberg:2001b}, the
search for stronger hypotheses on the given pseudovarieties to
guarantee decidability for the resulting pseudovariety seems a
worthwhile endeavor. Many works in this direction have been developed,
starting with deep results of Ash~\cite{Ash:1991} on the pseudovariety
of all finite groups and various attempts of extending it
\cite{Almeida:1999b, Almeida&Steinberg:2000a}, more or less successful
depending on the algebraic operator under consideration. The case of
the semidirect product was initially based on a result
\cite{Almeida&Weil:1996} in whose proof a gap was found and which
remains to be filled (see the discussion
in~\cite[Chapter~3]{Rhodes&Steinberg:2009qt}).

In the approach considered in~\cite{Almeida&Steinberg:2000a}, several
properties of pseudovarieties of semigroups are considered depending
on an enriched algebraic signature for finite semigroups given by an
implicit signature: besides multiplication, a set of other operations
commuting with homomorphisms is taken into account. A basic property
is whether the signature is sufficiently rich to define the
pseudovariety (definability). Another important property, which is
related with the work of Ash~\cite{Ash:1991}, is whether the signature
is sufficient to witness solutions, modulo the given pseudovariety, of
systems of equations with constraints in finite semigroups
(reducibility). Even for the system of equations consisting of the
single equation $x=y$, where $x$ and $y$~are variables, it is easy to
show that reducibility implies definability
\cite{Almeida&Steinberg:2000a}. Prior to this work, nowhere in the
literature seems to be an example showing that the converse does not
hold. Many pseudovarieties have been shown to be reducible
\cite{Almeida&Costa&Zeitoun:2005b, Costa&Nogueira:2009,
  Almeida&Costa&Zeitoun:2015b} with respect to the signature in which
they are naturally defined, but proofs are very much dependent on the
properties of the specific pseudovarieties.

The aim of this paper is to understand the relationship between
definability and reducibility. We show, that for simple
pseudovarieties of semigroups usually defined within a certain
signature, reducibility also holds. In contrast, we present several
negative examples, of pseudovarieties which are definable in a natural
signature but not reducible with respect to it. The examples are drawn
from three natural classes of semigroups: commutative semigroups,
groups, and completely regular semigroups. The technique to establish
the negative results involves choosing a suitable regular language for
which the syntactic congruence is tight enough to have simple to
handle classes and allow a combinatorial analysis of the desired
witnesses.

\section{Preliminaries}
\label{sec:prelims}

The reader is referred to standard references \cite{Almeida:1994a,
  Almeida:2003cshort, Pin:1986;bk, Rhodes&Steinberg:2009qt} for
background on semigroups, pseudovarieties, and profinite semigroups.
Most of what we write about semigroups may equally well be established
for monoids but we usually stick with semigroups. We also consider the
extension of the theory of pseudovarieties of semigroups to ordered
semigroups \cite{Pin&Weil:1996b}.

Given a pseudovariety of (ordered or not) semigroups \pv V, the
pro-\pv V semigroup freely generated by a set $A$ is denoted \Om AV.
The pseudovarieties of all finite semigroups and of all finite monoids
are denoted, respectively, \pv S and~\pv M.

An \emph{implicit signature} is a set in which each element belongs to
some free profinite semigroup \Om AS, where $A$~is a finite set,
including binary multiplication. The elements $w$ of \Om AS may be
seen as $A$-ary (implicit) operations with a natural interpretation
$w_S:S^A\to S$ on each profinite semigroup $S$: for each function
$\varphi:A\to S$, we put $w_S(\varphi)=\hat\varphi(w)$, where
$\hat{\varphi}$ is the unique extension of~$\varphi$ to a continuous
homomorphism $\Om AS\to S$, the existence of which amounts to the
universal property defining free profinite semigroups. Even when
restricted to finite semigroups, this interpretation is injective and
produces exactly those operations that commute with homomorphisms. In
particular, for an implicit signature $\sigma$, finite semigroups and
\Om AS are naturally viewed as $\sigma$-algebras and this is always
the structure of $\sigma$-algebras that we will consider on them. The
$\sigma$-subalgebra of~$\Om AS$ generated by the set $A$ of free
generators, denoted \oms AS, is easily seen to be the free
$\sigma$-algebra in the Birkhoff variety of $\sigma$-algebras
generated by~\pv S. The elements of~\oms AS are called
\emph{$\sigma$-words}. Note that, when $\sigma$ is reduced to binary
multiplication, $\sigma$-words are simply words, meaning elements of
the free semigroup $A^+$. In general, elements of~\Om AS are called
\emph{pseudowords}. The set $A^+$ is topologically dense in the metric
space \Om AS, that is, every pseudoword is the limit of some sequence
of words.

Some relevant and frequently encountered examples of pseudowords may
be described as follows:
%$x^\omega=\lim_{n\to\infty}x^{n!}$,
$x^{\omega+k}=\lim_{n\to\infty}x^{n!+k}$ ($k\in\mathbb{Z}$) and
$x^{p^\omega}=\lim_{n\to\infty}x^{p^{n!}}$. %
These are just a few examples of pseudowords in one variable, of which
there are uncountably many. More precisely, let $\hat{\mathbb{N}}$
denote the profinite completion of the semiring
$(\mathbb{N},{+},{\cdot})$ of non-negative integers. The
``logarithmic'' mapping $\Om{\{x\}}M\to\hat{\mathbb{N}}$ sending the
generator~$x$ to~1 extends uniquely to an isomorphism of~$\Om{\{x\}}M$
with the additive semigroup of the profinite
semiring~$\hat{\mathbb{N}}$. Composition in~$\Om{\{x\}}M$, given by
$(u\circ v)_S=u_S\circ v_S$ ``logarithmically'' translates to
multiplication in~$\hat{\mathbb{N}}$. Note that the pseudoword
$x^\omega$ corresponds to the only nonzero additive idempotent
of~$\hat{\mathbb{N}}$. The additive semigroup ideal it generates is a
subsemiring isomorphic with the profinite completion
$\hat{\mathbb{Z}}$ of the usual ring of integers
$(\mathbb{Z},{+},{\cdot})$, under an isomorphism sending $\omega+1$
to~$1$; it should therefore lead to no confusion to abuse notation and
denote the inverse isomorphism by $\gamma\mapsto\omega+\gamma$.

Abusing notation, we will also denote by the same symbols the implicit
signatures consisting of pseudowords determined by a set of exponents
in~$\hat{\mathbb{N}}$ and binary multiplication. For instance, we
write $\omega=\{\_\vphantom{|}^\omega,\, \_\cdot\_\}$. In case it is
stated that $\gamma\in\hat{\mathbb{Z}}$, the notation is interpreted
as $\gamma=\{\_\vphantom{|}^{\omega+\gamma},\, \_\cdot\_\}$.

By a \emph{pseudoidentity} we mean a formal equality $u=v$ of
pseudowords $u,v\in\Om AS$ for some finite set $A$. In case
$u,v\in\oms AS$, we call $u=v$ a \emph{$\sigma$-identity}. A finite
semigroup $S$ \emph{satisfies} the pseudoidentity $u=v$ and we write
$S\models u=v$ if $u_S=v_S$; this notion and notation are extended to
classes of finite semigroups and sets of pseudoidentities by requiring
that every semigroup in the class satisfy every pseudoidentity in the
set. Sometimes, it is convenient to use the abbreviation $u=1$ to
stand for the pair of pseudoidentities $ux=u=xu$, where $x$ does not
occur in~$u$. The class of all finite semigroups that satisfy all
pseudoidentities in a given set $\Sigma$ of pseudoidentities is
denoted $\op\Sigma\cl$ and is easily seen to be a pseudovariety. In
fact, every pseudovariety is of this form \cite{Reiterman:1982}.
Pseudoinequalities and satisfaction by finite ordered semigroups are
defined similarly. Pseudovarieties of ordered semigroups are also
defined by pseudoinequalities \cite{Molchanov:1994,Pin&Weil:1996b}. A
pseudovariety that may be defined by $\sigma$-identities is said to be
\emph{$\sigma$-equational} or simply \emph{equational} in case
$\sigma$~consists only of binary multiplication.

We recall some notions, simplified to the context that interests us
here. They are taken from \cite{Almeida&Steinberg:2000a}, a paper to
which the reader is also referred for further motivation. Given a
pseudovariety \pv V, a finite semigroup $S$, elements $s,t\in S$, and
an onto continuous homomorphism $\varphi:\Om AS\to S$, by a \emph{\pv
  V-solution} of the equation $x=y$ for the triple $(S,s,t)$ we mean a
pair $u,v\in\Om AS$ such that $\pv V\models u=v$, $\varphi(u)=s$, and
$\varphi(v)=t$. For an implicit signature $\sigma$, the pseudovariety
\pv V is said to be \emph{$\sigma$-reducible} (for the equation $x=y$)
if, whenever there is a \pv V-solution for a triple $(S,s,t)$, there
is some \pv V-solution consisting of $\sigma$-words. This property is
independent of the chosen onto continuous homomorphism $\varphi:\Om
AS\to S$ (cf.~\cite[Proposition~4.1]{Almeida&Steinberg:2000a}). In
case $\sigma$ is reduced to binary multiplication, we call \emph{word
  reducible} a $\sigma$-reducible pseudovariety. Similar notions may
be considered for pseudovarieties of ordered semigroups by replacing
the equation $x=y$ by the inequality $x\le y$ and the condition $\pv
V\models u=v$ by $\pv V\models u\le v$.

It is easy to see that, if a pseudovariety \pv V is
$\sigma$-reducible, then it is $\sigma$-equational (see
\cite[Proposition~4.2]{Almeida&Steinberg:2000a} for the unordered
case, the ordered case being handled similarly).

\section{Positive examples}
\label{sec:positive}

In this section, we exhibit some examples of pseudovarieties that are
usually defined by $\sigma$-identities that turn out also to be
$\sigma$-reducible.

The simplest example is that of locally finite pseudovarieties \pv V,
in which, for each positive integer~$n$, there is a bound on the size
of $n$-generated members of~\pv V, that is, \Om AV is finite for every
finite set $A$. Such pseudovarieties are clearly equational, being
defined for instance by all word identities describing the
multiplication of a word representative of each element by each
generator in each semigroup \Om AV with $A$~finite. The following
result may be considered a simple exercise and is presented here as a
warmup.

\begin{Prop}\label{p:loc-fin}
  Every locally finite pseudovariety is word reducible.
\end{Prop}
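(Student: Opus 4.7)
The plan is to unpack local finiteness in terms of the canonical continuous homomorphism $p_V \colon \Om AS \to \Om AV$. Because $\pv V$ is locally finite, $\Om AV$ is finite and thus discrete, so the condition $\pv V \models u = v$ for $u,v \in \Om AS$ is equivalent to the plain equality $p_V(u) = p_V(v)$ in $\Om AV$. Under this reading, a $\pv V$-solution $(u,v)$ of $x=y$ for a triple $(S,s,t)$ along an onto continuous homomorphism $\varphi \colon \Om AS \to S$ is just a pair satisfying $\varphi(u) = s$, $\varphi(v) = t$, and $p_V(u) = p_V(v)$.

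Next I would produce word substitutes for $u$ and $v$ that preserve the images under both $\varphi$ and $p_V$. Since $S$ and $\Om AV$ are finite and discrete and the maps $\varphi$ and $p_V$ are continuous, the sets
\[
U_u = \varphi^{-1}(s) \cap p_V^{-1}(p_V(u)), \qquad U_v = \varphi^{-1}(t) \cap p_V^{-1}(p_V(v))
\]
are clopen in $\Om AS$ and contain $u$ and $v$, respectively. Exploiting the density of $A^+$ in $\Om AS$ recalled in the preliminaries, I can pick words $u' \in U_u \cap A^+$ and $v' \in U_v \cap A^+$.

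By construction $\varphi(u') = s$ and $\varphi(v') = t$, and also
\[
p_V(u') = p_V(u) = p_V(v) = p_V(v'),
\]
the middle equality being exactly what $\pv V \models u = v$ tells us. Translated back, $\pv V \models u' = v'$, so $(u',v')$ is a $\pv V$-solution of $x=y$ for $(S,s,t)$ consisting of words, as required. There is no real obstacle here: local finiteness collapses the satisfaction relation $\pv V \models u = v$ into genuine equality in a finite discrete quotient, and once that is done the topological denseness of $A^+$ in $\Om AS$ takes care of the rest, which is why the proposition is naturally presented as a warmup.
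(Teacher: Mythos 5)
Your proof is correct and follows essentially the same route as the paper's: both arguments reduce $\pv V\models u=v$ to the equality $p_V(u)=p_V(v)$ in the finite discrete quotient $\Om AV$ and then use the density of $A^+$ in $\Om AS$ to replace $u$ and $v$ by words with the same images under $\varphi$ and $p_V$. The only (immaterial) difference is that the paper phrases the density step via approximating sequences of words whose images eventually stabilize, whereas you phrase it via nonempty clopen preimages containing a word.
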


\begin{proof}
  Consider a finite semigroup $S$, $s,t\in S$, and a continuous
  homomorphism $\varphi:\Om AS\to S$. Assume that the pair $(u,v)$ is
  a \pv V-solution of the equation $x=y$ for the triple $(S,s,t)$. Let
  $(u_n)_n$ and $(v_n)_n$ be sequences of words converging to the
  pseudowords $u$ and $v$, respectively. Let $\psi:\Om AS\to\Om AV$ be
  the natural projection, mapping each generator to itself. Since the
  topologies considered in~$S$ and \Om AV are discrete, for all
  sufficiently large $n$, we have $\varphi(u_n)=\varphi(u)$,
  $\psi(u_n)=\psi(u)$, and similarly for $v_n$ and $v$. As $\pv
  V\models u=v$ if and only if $\psi(u)=\psi(v)$, it follows that the
  pair $(u_n,v_n)$ is a \pv V-solution of the equation $x=y$ for the
  triple $(S,s,t)$ whenever $n$~is large enough.
\end{proof}

The argument of the preceding proof may be similarly applied to handle
arbitrary systems of equations. Except for the fact that only a
special type of systems, determined by finite directed graphs, were
considered in~\cite{Almeida&Steinberg:2000a}, a much stronger result
is~\cite[Theorem~4.18]{Almeida&Steinberg:2000a}.

%\section{One-half --- the part will not have a proper section}

The pseudovariety $\pv J^+=\op 1\le x\cl$ corresponds to the one-half
level in the Straubing-Th\'erien hierarchy
\cite[Proposition~8.4]{Pin&Weil:1994c}.\footnote{The syntactic order
  that we consider for a language $L\subseteq A^+$ is $u\le_Lv$ if,
  for all $x,y\in A^*$, $xuy\in L$ implies $xvy\in L$. Some authors
  \cite{Pin:1997} consider the opposite order, which naturally leads to
  reversed pseudoinequalities.} Since the basis is equational, our
goal is to prove that the pseudovariety is word-reducible (for $x\le
y$). First we must recall that $u\le v$ is satisfied in $\pv J^+$ if
and only if every finite subword of $u$ is also a subword of $v$. More
precisely, if we define, for any pseudoword $w\in\Om{A}{M}$, the set
of all finite subwords $\Sub{w}$ in the following way
\[
\Sub{w}=\{a_1a_2\dots a_n \in A^* \mid a_1, a_2, \dots ,a_n \in A,\
\phantom{sdsfdgfsfgfafdsadf}
\]
\[\phantom{sadfgsdagdfga}
\exists w_0,w_1,\dots, w_n\in\Om{A}{M} : w=w_0a_1w_1a_2 \dots
a_nw_n\},
\]
then we have $\pv J^+ \models u\le v$ if and only if $\Sub{u}\subseteq
\Sub{v}$.

\begin{Lemma}\label{l:equivalent-subword}
  Given a finite monoid $M$, a homomorphism $\varphi : \Om{A}{M}
  \rightarrow M$ and a pseudoword $w\in \Om{A}{M}$, there exists a
  finite subword $v$ of $w$ such that $\varphi(v)=\varphi(w)$.
\end{Lemma}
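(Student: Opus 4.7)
The plan is to combine the classical pigeonhole bound on partial products in a finite monoid with a compactness argument inside $\Om AM$. First, I pick a sequence of words $(u_n)_n$ in $A^*$ with $u_n\to w$ in $\Om AM$. By continuity of $\varphi$ and the fact that $M$ is finite (hence discrete), $\varphi(u_n)=\varphi(w)$ for all $n$ sufficiently large, and by discarding a finite prefix I may assume this holds for every~$n$.

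Second, for each $n$, among the subwords $v$ of $u_n$ satisfying $\varphi(v)=\varphi(u_n)=\varphi(w)$ (a nonempty finite set, since $u_n$ itself qualifies), I pick one of minimum length, say $v_n=b_1\cdots b_{l_n}$. I then consider the $l_n+1$ partial products $\varphi(b_1\cdots b_k)$ for $0\le k\le l_n$. If two of these agreed, say at indices $i<j$, then the word $b_1\cdots b_i b_{j+1}\cdots b_{l_n}$ would be a strictly shorter subword of $u_n$ with the same image under~$\varphi$, contradicting the minimality of~$v_n$. Hence the partial products are distinct elements of~$M$, which forces $l_n\le|M|-1$.

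Third, since $A$ is finite, there are only finitely many words of length at most $|M|-1$, so by pigeonhole some fixed word $v$ equals $v_n$ for every $n$ in an infinite set $N\subseteq\mathbb{N}$. Writing $v=a_1\cdots a_k$, I fix, for each $n\in N$, a factorization $u_n=u_n^{(0)}a_1u_n^{(1)}\cdots a_ku_n^{(k)}$ in $A^*$. By compactness of $(\Om AM)^{k+1}$ I pass to a sub-subsequence along which each coordinate $u_n^{(i)}$ converges to some pseudoword $z_i\in\Om AM$; continuity of multiplication in $\Om AM$ then yields $w=z_0a_1z_1\cdots a_kz_k$, so that $v\in\Sub w$, while $\varphi(v)=\varphi(w)$ is built into the construction.

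I do not expect any real difficulty: the pigeonhole step is a standard fact about realizing elements of a finite monoid by short products, and the transfer to $w$ is a routine compactness/continuity argument. The only point requiring care is the simultaneous convergence of all $k+1$ factors $u_n^{(i)}$, which is precisely why one works in the compact product space $(\Om AM)^{k+1}$ rather than extracting a convergent subsequence for each coordinate separately.
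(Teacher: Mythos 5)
Your proof is correct and follows essentially the same route as the paper's: your pigeonhole argument on partial products (shorten a minimal subword whenever two partial products coincide) is exactly the paper's extraction of a simple path in the Cayley graph of $M$, and the subsequent steps---finitely many candidate short subwords, a repeated one along an infinite subsequence, and compactness in $(\Om AM)^{k+1}$ to transfer the factorization to $w$---match the paper's argument. No gaps.
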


\begin{proof}
  % From a sequence of words $(w_n)_n$ converging to $w$ in
  % $\Om{A}{M}$,
  % one can extract a subsequence $(w_{n_i})_i$ satisfying
  % $\varphi(w_{n_i})=\varphi(w)$ for every $i\ge 1$. So, we can take
  % a
  There is a sequence of words $(w_n)_n$ converging to $w$ in
  $\Om{A}{M}$ such that $\varphi(w_n)=\varphi(w)$, $n\ge 1$. We
  consider the Cayley graph of $M$ with respect to~$A$, in which
  vertices are elements from $M$ and, for every $m\in M$ and $a\in A$,
  we have an edge from $m$ to $m\cdot \varphi(a)$ labeled by the
  letter~$a$.
  % (In other words, this graph is a semiautomaton with the
  % set of states $M$, over the alphabet~$A$.)
  Thus, every $w_n$ labels a path from $1$ to
  $\varphi(w_n)=\varphi(w)$ and one can extract a simple path from
  this path also starting in $1$ and ending in $\varphi(w)$. If the
  sequence of labels of the edges in this simple path is $(a_1,\dots
  ,a_k)$, then the considered word $w_n$ can be written as
  $w_n=u_0a_1u_1\dots a_k u_k$ for some words $u_0,\dots, u_k$. The
  assumptions concerning the extracted simple path also implies that
  $k < |M|$ and $\varphi(a_1\dots a_k)=\varphi(w)$. Since there are
  only finitely many simple paths in the Cayley graph of $M$, in
  infinitely many cases the extracted simple paths for words $w_n$ are
  the same. In this way we obtain a word $a_1\dots a_k$, a label of a
  simple path from $1$ to $\varphi(w)$, and a subsequence $(w_{n_i})$
  of the sequence $(w_n)$, such that $w_{n_i}=u_{i,0}a_1u_{i,1}\dots
  a_k u_{i,k}$ for some appropriate words $u_{i,j}$. Now, by
  compactness, there is a strictly increasing sequence $(i_\ell)_\ell$
  such that, for each $j=0,\dots,k$, $(u_{i_\ell,j})_\ell$ converges
  to some pseudoword $\bar u_j$. We thus obtain a final subsequence of
  $(w_n)_n$ converging to $w$ which shows that $w$ can be factorized
  as $w=\bar{u}_0a_1\bar{u}_1a_2 \dots a_k\bar{u}_k$. Hence,
  $v=a_1\dots a_k$ is a finite subword of $w$ satisfying the required
  equality $\varphi(v)=\varphi(w)$.
\end{proof}

\begin{Prop}
  The pseudovariety $\pv J^+$ is word-reducible (for $x\le y$).
\end{Prop}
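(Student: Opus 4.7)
The plan is to extract word-solutions $u', v' \in A^*$ directly from the given $\pv J^+$-solution $(u,v) \in \Om{A}{M} \times \Om{A}{M}$, using Lemma \ref{l:equivalent-subword} as the main tool. First, I apply that lemma to $u$ to obtain a word $u' = a_1 a_2 \cdots a_k \in \Sub{u}$ with $\varphi(u') = \varphi(u) = s$. Since the characterization recalled just before the lemma says $\pv J^+ \models u \le v$ is equivalent to $\Sub{u} \subseteq \Sub{v}$, we have $u' \in \Sub{v}$. By the definition of $\Sub{v}$, this yields a factorization $v = w_0\, a_1\, w_1\, a_2 \cdots a_k\, w_k$ for some pseudowords $w_0, w_1, \ldots, w_k \in \Om{A}{M}$.

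Next, for each index $i$ I pick a word $w_i' \in A^*$ with $\varphi(w_i') = \varphi(w_i)$. Such a word exists because $A^*$ is dense in $\Om{A}{M}$ and $\varphi$ is continuous into the discrete finite monoid $M$, so $\varphi^{-1}(\varphi(w_i))$ is an open neighborhood of $w_i$ and therefore meets $A^*$. Setting $v' = w_0'\, a_1\, w_1'\, a_2 \cdots a_k\, w_k' \in A^*$, multiplicativity of $\varphi$ gives $\varphi(v') = \varphi(v) = t$. Moreover, the word $u' = a_1 \cdots a_k$ is a scattered subword of $v'$, so every element of $\Sub{u'}$ also lies in $\Sub{v'}$; that is, $\Sub{u'} \subseteq \Sub{v'}$, which by the same characterization is equivalent to $\pv J^+ \models u' \le v'$. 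Hence $(u', v')$ is a word $\pv J^+$-solution of $x \le y$ for the triple $(M, s, t)$, as required.

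The whole argument essentially reduces to Lemma \ref{l:equivalent-subword}: once it supplies a word $u'$ sending to $s$ and embedding as a subword of both $u$ and $v$, the remaining work is the routine approximation of the pseudoword blocks $w_i$ by words with the correct $\varphi$-image, and the $\pv J^+$-inequality $u' \le v'$ is automatic from the subword characterization because $u'$ itself is a scattered subword of $v'$. I do not anticipate any real obstacle in this proof; the only non-trivial ingredient is the lemma, which has already been established.
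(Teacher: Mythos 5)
Your proof is correct and follows essentially the same route as the paper: apply Lemma~\ref{l:equivalent-subword} to $u$ to get a subword $u'$ with the right $\varphi$-image, use the characterization $\pv J^+\models u\le v\iff\Sub{u}\subseteq\Sub{v}$ to factor $v$ along the letters of~$u'$, and replace the pseudoword blocks by words with the same image under~$\varphi$. The only (harmless) difference is that you justify the existence of the words $w_i'$ by density and continuity rather than by a second appeal to the lemma, which is perfectly fine.
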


\begin{proof}
  Let $u,v\in \Om{A}{M}$ be such that $\pv J^+ \models u\le v$, and
  let $\varphi : \Om{A}{M} \rightarrow M$ be a homomorphism to a
  finite monoid. By Lemma~\ref{l:equivalent-subword}, there is a
  finite word $u'=a_1a_2\dots a_n$, with $a_1,a_2,\dots ,a_n \in A$,
  such that $u'$ is a subword of $u$ satisfying
  $\varphi(u')=\varphi(u)$. Since $\pv J^+ \models u'\le u$ and
  consequently $\pv J^+ \models u'\le v$, there is a factorization
  $v=v_0a_1v_1a_2 \dots a_n v_k$ with $v_0,v_1,\dots
  ,v_n\in\Om{A}{M}$. Now, if we replace each $v_i$ by a finite word
  $v'_i$ such that $\varphi(v'_i)=\varphi(v_i)$, then we obtain the
  finite word $v'=v'_0a_1v'_1a_2\dots a_nv'_n$. The constructed pair
  of words $u'$ and $v'$ have the following properties:
  $\varphi(u')=\varphi(u)$, $\varphi(v')=\varphi(v)$, and $u'\in
  \Sub{v'}$, whence $\pv J^+ \models u'\le v'$.
\end{proof}

%\section{Negative examples}
%\label{sec:negative}

The remainder of the paper presents several examples of
pseudovarieties that are $\sigma$-equational but not
$\sigma$-reducible, sometimes even not $\sigma'$-reducible for a
larger signature $\sigma'$.

\section{Commutative semigroups}
\label{sec:com}

Our first negative example is that of the equational pseudovariety
$\pv{Com}=\op xy=yx\cl$ of all finite commutative semigroups. It is
shown in \cite{Almeida&Delgado:2001} that \pv{Com} is
$(\omega-1)$-reducible, in fact for all finite systems of
$(\omega-1)$-word equations. We show that it is not
$\omega$-reducible, whence also not word reducible (for the equation
$x=y$). Let \pv{Ab} stand for the pseudovariety of all finite Abelian
groups.

\begin{Thm}\label{t:Com}
  No pseudovariety in the interval $[\pv{Ab},\pv{Com}]$ is
  $\omega$-reducible.
\end{Thm}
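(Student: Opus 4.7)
The plan is to exhibit a single triple $(S,s,t)$ with an onto continuous homomorphism $\varphi\colon\Om{A}{S}\to S$ that simultaneously witnesses the failure of $\omega$-reducibility throughout the interval $[\pv{Ab},\pv{Com}]$. The required monotonicity is: if $\pv V\subseteq\pv{Com}$ then every $\pv{Com}$-solution is a $\pv V$-solution, while if $\pv{Ab}\subseteq\pv V$ then every $\omega$-word $\pv V$-solution yields, in particular, an $\omega$-word $\pv{Ab}$-solution. Hence the task reduces to producing a $\pv{Com}$-solution for $(S,s,t)$ and ruling out the existence of any $\omega$-word $\pv{Ab}$-solution.

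The key structural ingredient is the explicit description $\Om{A}{Com}\cong\hat{\mathbb{N}}^{|A|}$, together with $\Om{A}{Ab}\cong\hat{\mathbb{Z}}^{|A|}$, and the observation that the commutative image of an $\omega$-word is a non-zero vector in $E^{|A|}$, where $E=\mathbb{N}\cup(\omega+\mathbb{N})$. Since the coordinate-wise projection $E\to\hat{\mathbb{Z}}$ sends $n\mapsto n$ and $\omega+n\mapsto n$, the commutative images of $\omega$-words project into $\mathbb{N}^{|A|}\subseteq\hat{\mathbb{Z}}^{|A|}$, whereas general pseudowords can have commutative coordinates in $\hat{\mathbb{N}}$ projecting anywhere in~$\hat{\mathbb{Z}}$; for instance, an element such as $\omega-1$ projects to $-1$.

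Concretely, I would take $A=\{a,b\}$ and let $S$ be a non-commutative finite semigroup, realized as the syntactic semigroup of a suitably chosen regular language over $A$ (following the technique flagged at the end of the introduction). The elements $s,t$ are chosen so that: (i) some pair of pseudowords $u,v$ with $\varphi(u)=s$ and $\varphi(v)=t$ shares a common commutative vector in $\hat{\mathbb{N}}^2$, giving the required $\pv{Com}$-solution through an asymmetric rearrangement of letters that exploits non-commutativity of $S$; and (ii) the $\omega$-word fibers $\varphi^{-1}(s)\cap\oms{A}{S}$ and $\varphi^{-1}(t)\cap\oms{A}{S}$, after projection to $\hat{\mathbb{Z}}^2$, lie in disjoint subsets of $\mathbb{N}^2$.

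The main technical obstacle is the verification of (ii). The two-mode description of $\omega$-word exponents—each coordinate being either a non-negative integer $n$ or of the form $\omega+n$—reduces the description of each $\omega$-word fiber to a finite union of parametric families indexed by small integer parameters. One then enumerates these families, computes their $\hat{\mathbb{Z}}^2$-projections, and verifies that no vector is common to both sides. Choosing $S$ so that its syntactic congruence interacts cleanly with the alternation patterns inherent in $\omega$-words is what makes this enumeration tractable, matching the paper's own remark that the technique relies on syntactic congruences that are tight enough to have simple to handle classes.
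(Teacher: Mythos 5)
Your reduction of the problem is correct and matches the paper's: since every $\pv{Com}$-solution is a $\pv V$-solution for $\pv V\subseteq\pv{Com}$, and every $\pv V$-solution is an $\pv{Ab}$-solution for $\pv{Ab}\subseteq\pv V$, it suffices to find one triple $(S,s,t)$ admitting a $\pv{Com}$-solution but no $\omega$-word $\pv{Ab}$-solution. Your key structural observation is also exactly the paper's: under the projection $\pi\colon\Om XS\to\Om X{\mathrm{Ab}}\cong\hat{\mathbb{Z}}\times\hat{\mathbb{Z}}$, every $\omega$-word lands in $\mathbb{N}\times\mathbb{N}$ (because $\pi(w^\omega)$ is the group identity, then induct on the $\omega$-term), whereas exponents such as $\omega-1$ project to $-1$.

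However, the proposal stops precisely where the proof begins. You never produce the semigroup $S$, the elements $s,t$, the $\pv{Com}$-solution, or the verification that the $\omega$-word fibers are incompatible; all of this is deferred to a ``suitably chosen regular language'' whose existence is the entire content of the theorem. This is a genuine gap, not a routine verification: one must find a language whose syntactic classes are rigid enough that the fibers $\varphi^{-1}(s)$ and $\varphi^{-1}(t)$ have commutative images confined to one-parameter arithmetic families whose common solutions are forced to be negative, while still leaving room for a $\pv{Com}$-solution using $\omega-1$. The paper takes $L=\bigl((a^2b)^2\bigr)^*\cup\bigl((ab^2)^2\bigr)^*$, $s=[bab^2]$, $t=[a^2ba]$; the fiber descriptions force $\pi(u)=(2\alpha+1,4\alpha+3)$ and $\pi(v)=(4\beta+3,2\beta+1)$, whose only integer solution is $\alpha=\beta=-1\notin\mathbb{N}$, while $\bigl(y(xy^2)^{\omega-1},(x^2y)^{\omega-1}x\bigr)$ is a $\pv{Com}$-solution since both components have commutative image $(\omega-1,\omega-1)$ and $[ab^2]^{\omega-1}=[ab^2]$, $[a^2b]^{\omega-1}=[a^2b]$ in $S$. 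Without an explicit witness of this kind and the accompanying computation of the syntactic classes, your argument establishes only that the strategy \emph{could} work, not that it does.
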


\begin{proof}
  Let \pv V be a pseudovariety such that $\pv{Ab}\subseteq\pv
  V\subseteq\pv{Com}$. We exhibit a finite semigroup $S$, elements
  $s,t\in S$ and an onto continuous homomorphism
  $\varphi:\Om{\{x,y\}}S\to S$ such that there is a \pv{Com}-solution
  of the equation $x=y$ for the triple $(S,s,t)$ but no such solution
  exists in $\omega$-words. Let $X=\{x,y\}$ and $A=\{a,b\}$. We take
  $S$ to be the syntactic semigroup over the alphabet $A$ of the
  language $L=\bigl((a^2b)^2\bigr)^*\cup\bigl((ab^2)^2\bigr)^*$ and we
  denote by $[w]$ the syntactic class of a word $w\in\{a,b\}^+$. Let
  $s=[bab^2]$, and $t=[a^2ba]$. A standard calculation shows that
  $s=bab^2\bigl((ab^2)^2\bigr)^*$, $t=\bigl((a^2b)^2\bigr)^*a^2ba$,
  and
  \begin{equation}
    [ab^2]^{\omega-1}=[ab^2],\quad 
    [a^2b]^{\omega-1}=[a^2b].
    \label{eq:omm1}
  \end{equation}
  The continuous homomorphism $\varphi$~is
  defined by letting $\varphi(x)=[a]$ and $\varphi(y)=[b]$. Note that,
  for a word $w\in \{a,b\}^+$, the set $\varphi^{-1}([w])$ is the
  topological closure of $\eta^{-1}([w])$, where $\eta$~is the
  restriction of $\varphi$ to~$X^+$, that is, essentially the
  syntactic homomorphism of the language $L$ up to the change of
  letters $x\leftrightarrow a$, $y\leftrightarrow b$.

  We claim that there is no \pv V-solution of the equation $x=y$ for
  the triple $(S,s,t)$ in $\omega$-words. %
  Let $u,v\in\Om XS$ be pseudowords such that $\varphi(u)=s$ and
  $\varphi(v)=t$. The above description of the syntactic class $s$
  shows that %
  $u\in\overline{yxy^2\bigl((xy^2)^2\bigr)^*}
  =yxy^2\langle(xy^2)^2\rangle^1$, where $\langle w\rangle$ denotes
  the closed subsemigroup of~$\Om XS$ generated by~$w$; similarly, we
  have %
  $v\in\langle(x^2y)^2\rangle^1 x^2yx$.
  
  Let $\pi:\Om XS\to\Om X{Ab}$ be the natural continuous homomorphism,
  mapping each free generator to itself. It is well known that
  the profinite group $\Om X{Ab}$ is isomorphic with the product of
  two copies of the additive group $\hat{\mathbb{Z}}$ and we identify
  it with this product.

  Note that, for $w\in\Om XS$, since $\pi(w^\omega)$ is an idempotent,
  it is the identity element of the group
  $\hat{\mathbb{Z}}\times\hat{\mathbb{Z}}$. By induction on the
  construction of an $\omega$-word from the generators, it follows
  that $\pi(\omo XS)\subseteq\mathbb{N}\times\mathbb{N}$, the reverse
  inclusion being obvious.

  For each $w\in\Om XS$, we let $(|w|_x,|w|_y)=\pi(w)$. By the above
  discussion, there exist $\alpha,\beta\in\hat{\mathbb{Z}}$ such that
  $$|u|_x=2\alpha+1,\ %
  |u|_y=4\alpha+3,\ %
  |v|_x=4\beta+3,\ %
  |v|_y=2\beta+1.
  $$
  Assuming that $\alpha,\beta\in\mathbb{Z}$ and $\pv{Ab}\models u=v$,
  which entails $|u|_x=|v|_x$ and $|u|_y=|v|_y$, we obtain the system
  of equations $\beta=2\alpha+1$ and $\alpha=2\beta+1$, whose only
  integer solution is $\alpha=\beta=-1$. Hence, $u$ and $v$ cannot
  both be $\omega$-words, which establishes the claim. On the other
  hand, in view of the preceding calculations and~\eqref{eq:omm1}, the
  pair $\bigl(y(xy^2)^{\omega-1},(x^2y)^{\omega-1}x\bigr)$ is a \pv
  V-solution of the equation $x=y$ for the triple $(S,s,t)$. Hence,
  \pv V is not $\omega$-reducible.
\end{proof}

\section{Groups}

We say that a pseudovariety \pv V \emph{has infinite exponent} if it
satisfies no pseudoidentity of the form $x^{\omega+n}=x^\omega$, where
$n$~is a positive integer. Given a set $\sigma$ of pseudowords, we let
$\pv H_\sigma=\op w=1: w\in\sigma\cl$, which is a pseudovariety of
groups.

\begin{Thm}\label{t:groups}
  Let $\sigma$ be a set of binary implicit operations on the alphabet
  $X=\{x,y\}$ in which every element $w$ satisfies one of the
  following properties:
  \begin{enumerate}
  \item\label{item:groups-1} either $x^3$ or $y^3$ is a suffix of~$w$;
  \item\label{item:groups-2} both $xyx$ and $yxy$ are subwords of~$w$.
  \end{enumerate}
  Suppose further that the pseudovariety $\pv H_\sigma$ has infinite
  exponent. Then, no pseudovariety in the interval $[\pv
  H_\sigma\cap\pv{Ab},\pv H_\sigma]$ is
  $\sigma\cup\{\omega\}$-reducible.
\end{Thm}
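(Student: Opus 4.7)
Plan: Following the blueprint of Theorem~\ref{t:Com}, I would take $S$ to be the syntactic semigroup of $L=\bigl((a^2b)^2\bigr)^*\cup\bigl((ab^2)^2\bigr)^*$ over $A=\{a,b\}$, with the continuous onto homomorphism $\varphi:\Om XS\to S$ determined by $\varphi(x)=[a]$, $\varphi(y)=[b]$, and with $s=[bab^2]$ and $t=[a^2ba]$. In~$S$ one has $[a^3]=[b^3]=0$, and every word containing both $aba$ and $bab$ as factors represents~$0$, since such a word fits neither the $L_1$- nor the $L_2$-pattern. Conditions~(\ref{item:groups-1}) and~(\ref{item:groups-2}) are the structural compatibility conditions that match the signature~$\sigma$ to this witness semigroup, controlling the~$S$-image of $\sigma$-operations on simple arguments via these zero relations. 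For the existence of a $\pv V$-solution uniform across the interval, I would reuse the pair $(u_0,v_0)=\bigl(y(xy^2)^{\omega-1},(x^2y)^{\omega-1}x\bigr)$ from Theorem~\ref{t:Com}: in any group, $w^{\omega-1}=w^{-1}$, so $u_0=y\cdot y^{-2}x^{-1}=y^{-1}x^{-1}=v_0$, whence $\pv G\models u_0=v_0$ and \emph{a fortiori} $\pv H_\sigma\models u_0=v_0$.

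For the nonexistence of $\sigma\cup\{\omega\}$-word solutions, let $G$ denote the free pro-$(\pv H_\sigma\cap\pv{Ab})$ group on~$X$ and let $\pi:\Om XS\to G$ be the natural continuous homomorphism. The infinite-exponent hypothesis on~$\pv H_\sigma$ propagates to $\pv H_\sigma\cap\pv{Ab}$: for each $n\ge 1$, a cyclic subgroup of any witness in $\pv H_\sigma$ to non-satisfaction of $x^{\omega+n}=x^\omega$ is Abelian and still lies in~$\pv H_\sigma$, so $G$ itself has infinite exponent and the generator~$x$ has infinite order in~$G$. Since $\pv H_\sigma\cap\pv{Ab}\models w=1$ for every $w\in\sigma$, the operation $w_G$ is constantly the identity of~$G$, so $\pi(w(v_1,v_2))=0$ for arbitrary $v_1,v_2\in\Om XS$; likewise $\pi(v^\omega)=0$ since $G$ is a group. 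Induction on the construction of a $\sigma\cup\{\omega\}$-word~$u$ then yields $\pi(u)=(a_u,b_u)\in\mathbb{N}\times\mathbb{N}$, the pair counting the top-level occurrences of the generators in~$u$.

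Combining $\varphi(u)=s$ with the topological factorization $u\in yxy^2\langle(xy^2)^2\rangle^1$ forces $(a_u,b_u)=(1+2k,3+4k)$ for some $k\in\hat{\mathbb{Z}}$; the discreteness of $(a_u,b_u)$ in $\mathbb{N}^2$ then pins $k\in\mathbb{N}$. Similarly, $(a_v,b_v)=(3+4\ell,1+2\ell)$ with $\ell\in\mathbb{N}$. Since $x$ and $y$ have infinite order in~$G$, the relation $\pv V\models u=v$, which imposes $\pi(u)=\pi(v)$, forces the equalities $1+2k=3+4\ell$ and $3+4k=1+2\ell$ in~$\mathbb{Z}$; their unique solution $k=\ell=-1$ contradicts $k,\ell\in\mathbb{N}$. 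The principal delicate step will be justifying $\pi(u)\in\mathbb{N}\times\mathbb{N}$ uniformly over the construction of $\sigma\cup\{\omega\}$-words, which rests on $w_G$ being constantly identity for $w\in\sigma$ and on $G$ being a group; conditions~(\ref{item:groups-1}) and~(\ref{item:groups-2}) intervene in a complementary way, through the zero structure of~$S$, to ensure the chosen witness triple is matched to~$\sigma$.
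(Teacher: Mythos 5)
Your proposal takes a genuinely different route from the paper's. The paper's proof uses the syntactic semigroup of $L=a^2a^+b^+a^2$, forces $v=x$ and $u=x^\alpha y^\beta x^2$, and then makes a combinatorial analysis of the \emph{last} $\sigma\cup\{\omega\}$-operation applied in an expression of~$u$ (suffixes $x^3$, subwords $yxy$, reduction to an identity $x^{k+2}=x$); that is exactly where hypotheses~(\ref{item:groups-1}) and~(\ref{item:groups-2}) enter. You instead recycle the triple of Theorem~\ref{t:Com} and replace the combinatorial step by the observation that every $w\in\sigma$ and the $\omega$-power evaluate to the identity in the free pro-$(\pv H_\sigma\cap\pv{Ab})$ group $G\cong F\times F$, so $\sigma\cup\{\omega\}$-words project into $\mathbb{N}\times\mathbb{N}$; your reduction of the infinite-exponent hypothesis from $\pv H_\sigma$ to $\pv H_\sigma\cap\pv{Ab}$ (via cyclic subgroups) is correct, so the generator of~$F$ has infinite order, and the existence half is also fine since $\pv G\models y(xy^2)^{\omega-1}=(x^2y)^{\omega-1}x$ together with~\eqref{eq:omm1} gives a \pv V-solution for every $\pv V\subseteq\pv H_\sigma$. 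I could not break this argument; what it buys, if sound, is a \emph{stronger} theorem, since it nowhere uses hypotheses (\ref{item:groups-1}) and~(\ref{item:groups-2}) and needs only the infinite-exponent assumption.

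Two points require attention. First, the step ``the discreteness of $(a_u,b_u)$ in $\mathbb{N}^2$ then pins $k\in\mathbb{N}$'' is unjustified: $k$ lives in the image of $\hat{\mathbb{N}}$ acting on~$F$, which may have torsion or invertible~$2$, so $1+2k\in\mathbb{N}$ does not force $k\in\mathbb{N}$, and the linear system in $k,\ell$ need not have a unique solution in~$F$. The fix is to eliminate $k$ and $\ell$: the factorizations give $b_u-2a_u=1$ and $a_v-2b_v=1$ in~$F$ independently of $k,\ell$, and since $a_u=a_v$ and $b_u=b_v$ lie in $\mathbb{N}\cdot g$ with $g$ of infinite order, one gets $a_u=4a_u+3$ in~$\mathbb{Z}$, i.e.\ $a_u=-1$, the desired contradiction. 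Second, your closing claim that conditions~(\ref{item:groups-1}) and~(\ref{item:groups-2}) ``intervene through the zero structure of~$S$'' is not an argument and is simply false for your proof: the remarks about $[a^3]=[b^3]=0$ play no role anywhere. You should either own the stronger statement and scrutinize why the hypotheses are dispensable (the paper's proof genuinely needs them, and Corollary~\ref{c:groups} shows they are automatic for singleton~$\sigma$, so no contradiction with known examples arises), or locate where your argument secretly requires them; as written, that sentence is hand-waving that papers over the fact that you have not engaged with the combinatorial content of the stated hypotheses.
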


\begin{proof}
  Let \pv H be a pseudovariety in the interval $[\pv
  H_\sigma\cap\pv{Ab},\pv H_\sigma]$. Note\ that $\pv G\models
  x^{\omega-1}y^\omega x^2=x$. We exhibit a semigroup $S$ and a pair
  of its elements $s,t$ such that $(x^{\omega-1}y^\omega x^2,x)$ is an
  $\pv H$-solution of the equation $x=y$ for the triple $(S,s,t)$,
  which has no $\pv H$-solution consisting of
  $\sigma\cup\{\omega\}$-words.
  We take $S$ to be the syntactic semigroup of the language
  $L=a^2a^+b^+a^2$ over the alphabet $A=\{a,b\}$. For each $w\in L$, a
  pair $(p,q)\in A^*\times A^*$ is a context of $w$, that is, $pwq\in
  L$, if and only if $p\in a^*$ and $q$ is the empty word. This
  means that all words from $L$ form one syntactic class $L=[w]$, for
  any $w\in L$, for instance for $w=a^3ba^2$. Since a pair $(a^3ba,1)$
  is a context of the word $a$ and it is not a context of any other
  word, we get $[a]=\{a\}$. Similarly, we can also see that
  $[a^4]=[a^3]$ and $[b^2]=[b]$.

  Now, consider the onto continuous homomorphism $\hat\varphi :
  \Om{X}{S} \rightarrow S$, which is the extension of the homomorphism
  $\varphi: X^* \rightarrow S$ uniquely given by $\varphi(x)=[a]$ and
  $\varphi(y)=[b]$. Further, we put
  $s=\hat\varphi(x^{\omega-1}y^\omega
  x^2)=[a]^{\omega-1}[b]^{\omega}[a]^2=[a^3ba^2]$ and
  $t=\hat\varphi(x)=[a]$, so that the pair $(x^{\omega-1}y^\omega
  x^2,x)$ is a \pv G-solution of the equation $x=y$ for the triple
  $(S,s,t)$, whence also an $\pv H$-solution. We show that there is no
  $\pv H$-solution of the equation $x=y$ for the triple $(S,s,t)$ in
  $\sigma\cup\{\omega\}$-words.

  % Assume, for a contradiction, that there are
  Suppose that $u,v\in \Om{X}{S}$ are such that $\hat\varphi(u)=s$,
  $\hat\varphi(v)=t$ and $\pv H\models u=v$. Since $t=[a]=\{a\}$
  implies $\varphi^{-1}(t)=\{x\}$, we have also
  $\hat{\varphi}^{-1}(t)=\{x\}$ and, consequently, $v=x$. Now, we see
  that $\varphi^{-1}(s)=\{x^my^nx^2\mid m\ge 3,\, n\ge 1 \}$, because
  $s=[a^3ba^2]=L$. The pseudoword $u$ must, therefore, be a limit of
  words from the set $\{x^my^nx^2\mid m\ge 3,\, n\ge 1 \}$; in
  particular, $u$ does not contain $yxy$ as a subword. Hence, $u$~is
  equal (as an element of $\Om{X}{S}$) to a pseudoword of the form
  $x^\alpha y^\beta x^2$, where
  $\alpha,\beta\in\hat{\mathbb{N}}\setminus\{0\}$. Thus, \pv H
  satisfies the pseudoidentity $x^\alpha y^\beta x^2=x$.

  % each of $\alpha$ and $\beta$, is either a positive integer or of the
  % form $\omega+\delta$ for some $\delta\in\hat{\mathbb{Z}}$. Since
  % $u=v$ is valid in $\pv H$, the same must be true if we identify $x$
  % and $y$ in this pseudoidentity. In this way we obtain the
  % pseudoidentity $x^{\alpha+\beta+2}=x$, where $\alpha$ and $\beta$
  % are as described above.

  Now, suppose that $u$ is a $\sigma\cup\{\omega\}$-word. We claim
  that this assumption leads to a contradiction, namely that $\pv
  H_\sigma$ satisfies some identity of the form $x^{k+2}=x$, where
  $k\in\mathbb{N}$, which is contrary to the hypothesis that $\pv
  H_\sigma$ has infinite exponent and thereby concludes the proof. To
  prove the claim, consider an expression of~$u$ as a
  $\sigma\cup\{\omega\}$-word. Since $u$~is not a word, such an
  expression must be of the form $u=u_0\psi(w)u_1$, where $u_0$ is
  another $\sigma\cup\{\omega\}$-word, $u_1$ is a word,
  $w\in\sigma\cup\{x^\omega\}$, and $\psi$ is a continuous
  endomorphism of~\Om XS. If $|u_1|\ge2$, then $x^2$ is a suffix
  of~$u_1$ and the claim holds since $\pv H\models u_0\psi(w)=u_2$ for
  some word $u_2$ which, upon identification of the variables $x$ and
  $y$, reduces the pseudoidentity $u=v$ to an identity of the form
  $x^{k+2}=x$. Hence, we may assume that $|u_1|\le1$, so that
  $\psi(w)$ must end with the letter~$x$.

  Let $z$ be the last letter of~$w$, which entails that $x$~is the
  last letter of~$\psi(z)$. Note that $w$ cannot be $x^\omega$ for,
  otherwise, $x^3$ would be a suffix of~$u=x^\alpha y^\beta x^2$,
  which is clearly not the case. Suppose first that $w$~satisfies the
  condition~\eqref{item:groups-1} of the hypothesis, so that $z^3$~is
  a suffix of~$w$. If both letters $x$ and $y$ appear in~$\psi(z)$
  then, since $\psi(z)$~ends with $x$, $yx$~is a subword of~$\psi(z)$, so
  $yxy$~is a subword of~$\psi(z^2)$, whence also of $\psi(w)$ and
  of~$u$, which we know to be false. Hence, $\psi(z)$ must be of the
  form $x^\gamma$, with $\gamma\in\hat{\mathbb{N}}\setminus\{0\}$.
  Again, since $z^3$~is a suffix of~$w$, it follows that $x^3$ is a
  suffix of~$u$, which is not the case. It remains to consider the
  case where $w$~satisfies the condition~\eqref{item:groups-2} of the
  hypothesis. If $\psi(w)$~is a power of~$x$, then again $x^3$~is a
  suffix $\psi(w)$, whence of~$u$, which is false. Hence, both letters
  $x$ and $y$ intervene in~$\psi(w)$. Since both $xyx$ and $yxy$~are
  subwords of~$w$, it follows $yxy$ is a subword of~$u$, which is
  false. This concludes the proof of the claim.
\end{proof}

Of course, the dual of the theorem, where ``suffix'' is replaced by
``prefix'' in condition~\eqref{item:groups-1} is also valid. In case
$\sigma$~is a singleton set, we may combine these two results to
obtain a stronger result.

\begin{Cor}
  \label{c:groups}
  Let $u\in\Om{\{x,y\}}S$ be a pseudoword such that the group
  pseudovariety $\pv H_u=\op u=1\cl$ has infinite exponent. Then no
  pseudovariety in the interval $[\pv H_u\cap\pv{Ab},\pv H_u]$ is
  $\{u,\omega\}$-reducible.
\end{Cor}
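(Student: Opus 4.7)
Set $X=\{x,y\}$. My plan is to apply Theorem~\ref{t:groups} with $\sigma=\{u\}$, or its prefix-dual, after showing that $u\in\Om XS$ satisfies at least one of the following three conditions: (a)~$x^3$ or $y^3$ is a suffix of~$u$; (b)~$x^3$ or $y^3$ is a prefix of~$u$; (c)~both $xyx$ and $yxy$ are (scattered) subwords of~$u$. Since $\sigma$~is a singleton, verifying any one of these lets either the theorem or its dual conclude that no pseudovariety in $[\pv H_u\cap\pv{Ab},\pv H_u]$ is $\{u,\omega\}$-reducible.

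I~first dispose of two easy cases. If $u\in X^+$ is a finite word of length~$n$, then substituting $x,y\mapsto g$ in any $G\in\pv H_u$ yields $g^n=1$, contradicting the infinite-exponent hypothesis; so $u$~is not a finite word. If $u$~involves only the letter~$x$, then $u=x^\alpha$ for some $\alpha\in\omega+\hat{\mathbb{Z}}$ (as $u$~is not a word), and $x^3$~is then both a prefix and a suffix of~$u$, so~(a) holds; the case of only~$y$ is symmetric.

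The remaining case has $u$~involving both letters while~(c)~fails. By the $x\leftrightarrow y$ symmetry I~may assume $xyx\notin\Sub{u}$. The key observation is purely combinatorial: a finite word over~$X$ avoids~$xyx$ as a scattered subword if and only if it lies in $x^*y^*$ (let~$k$ be the position of the last letter~$x$; no~$y$ can precede~$k$, for otherwise $xyx$ would be a scattered subword). Since the condition $xyx\in\Sub{\cdot}$ is clopen on~\Om XS, this transfers to pseudowords, yielding $u=x^\alpha y^\beta$ with nonzero $\alpha,\beta\in\hat{\mathbb{N}}$. As $u$~is not a finite word, at least one of $\alpha,\beta$ lies in $\omega+\hat{\mathbb{Z}}$: if~$\alpha$ does, then $x^3$~is a prefix of~$u$, giving~(b); if~$\beta$ does, then $y^3$~is a suffix of~$u$, giving~(a). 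The main obstacle I~foresee is precisely this last structural step---transferring the word-level characterization of $xyx$-avoiding words to pseudowords via compactness and the clopenness of scattered-subword conditions in~\Om XS.
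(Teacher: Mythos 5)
There is a genuine gap, and it occurs in two places. First, your combinatorial lemma is false: a word over $\{x,y\}$ avoids $xyx$ as a scattered subword if and only if it lies in $y^*x^*y^*$, not in $x^*y^*$ (the word $yx$, or $y^3x^2y$, avoids $xyx$ but is not in $x^*y^*$; your argument only shows that no $y$ sitting between two $x$'s is allowed, which is exactly the $y^*x^*y^*$ condition). Consequently the structural conclusion should be $u=y^\alpha x^\beta y^\gamma$ rather than $u=x^\alpha y^\beta$.

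Second, and more seriously, even after this correction your intended trichotomy (a)/(b)/(c) simply does not hold for all relevant $u$: the residual case $u=y^m x^\beta y^n$ with $m,n\in\{1,2\}$ and $\beta$ infinite (for instance $u=yx^\omega y$, or after exchanging variables $u=xy^\omega x$) satisfies none of (a), (b), (c), since $x^3$ and $y^3$ are neither prefixes nor suffixes and $xyx$ is not a scattered subword. So the reduction to Theorem~\ref{t:groups} and its dual cannot cover all cases, and your proof never addresses this one. This is precisely the case the paper treats separately: writing $u=x^my^\alpha x^n$ with $m,n\in\{1,2\}$ (up to exchanging variables) and substituting $y\mapsto x^\omega$ in the pseudoidentity $u=1$, one gets $\pv H_u\models x^{m+n}=1$, contradicting the infinite-exponent hypothesis. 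Note that this residual case is only excluded by invoking infinite exponent through that substitution; it cannot be excluded by combinatorics on $u$ alone, so the extra argument is indispensable. By contrast, the compactness/clopenness transfer from words to pseudowords that you flagged as the main obstacle is unproblematic.
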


\begin{proof}
  If any of the conditions \eqref{item:groups-1}, or its dual, or
  \eqref{item:groups-2} of Theorem~\ref{t:groups} is satisfied by~$u$,
  then we may apply the theorem to obtain the desired non-reducibility
  property. Otherwise, up to exchanging variables, we may assume that
  $u$ is a pseudoword of the form $x^my^\alpha x^n$, where
  $m,n\in\{1,2\}$. But then, substituting $y$ by~$x^\omega$ in the
  pseudoidentity $u=1$, we see that $\pv H_u\models x^{m+n}=1$, which
  contradicts the assumption that $\pv H_u$~has infinite exponent.
\end{proof}

In particular, the pseudovarieties \pv G and $\pv G_p=\op
x^{p^\omega}=1\cl$, where $p$~is prime, are not $\omega$-reducible:
take $\sigma=\{x^\omega\}$ for the first of these pseudovarieties and
$\sigma=\{x^{p^\omega}\}$ for the latter, which is in fact not
$\{\omega,p^\omega\}$-reducible. Many other examples can be
considered. By
\cite[Theorem~3.2]{Almeida&Margolis&Steinberg&Volkov:2010}, every
extension-closed pseudovariety of groups is of the form $\pv H_u$ for
some $u\in\Om{\{x,y\}}S$, and therefore Corollary~\ref{c:groups}
yields that, if nontrivial, then the pseudovariety $\pv H_u$ is not
$\{u,\omega\}$-reducible. Note that, for the pseudovariety $\pv
G_{\mathrm{sol}}$ of all finite solvable groups, concrete pseudowords
$u\in\Om{\{x,y\}}S$ such that $\pv H_u=\pv G_{\mathrm{sol}}$ have been
hard to construct, with arguments that depend on the classification of
finite simple groups %
\cite{ Bandman&Greuel&Grunewald&Kunyavskii&Pfister&Plotkin:2003,
  Bray&Wilson&Wilson:2005}.
Another important pseudovariety of the form $\pv H_u$ is $\pv
G_{\mathrm{nil}}$, of all finite nilpotent groups, where $u=[x,_\omega
y]=\lim_{n\to\infty}[x,_ny]$, with the iterated commutator defined
recursively by $[s,t]=s^{\omega-1}t^{\omega-1}st$ and
$[s,_{n+1}t]=[[s,_nt],t]$. In this case, it is easy to see that one
may apply Theorem~\ref{t:groups} directly.

\section{Completely regular semigroups}
\label{sec:CR}

The aim of this section is to prove that the pseudovariety
$\pv{CR}=\op x^{\omega+1}=x\cl$, consisting of all completely regular
semigroups, is not $\omega$-reducible. Since our proof technique is
similar to the case of groups, we consider the more general case of
the pseudovarieties $\pv{CR}(\pv{H}_\sigma)=\pv{CR}\cap \bar{\pv
  H}_\sigma$ of all completely regular semigroups whose subgroups
belong to $\pv{H}_\sigma$, where $\pv{H}_\sigma$ has infinite exponent
and $\sigma$ satisfies some suitable combinatorial hypothesis to be
specified below. Note that $\pv{CR}$ can be obtained as
$\pv{CR}(\pv{H}_\omega)$, because $\pv{H}_\omega=\pv G$.

We say that a pseudoword $w$ has two \emph{disjoint occurrences} of
another pseudoword $u$ if there is a factorization of~$w$ in which $u$
appears twice as a factor. A word $u$ is said to appear as a factor
of~$w$ \emph{within bounded distance from the end} if some finite
suffix of~$w$ admits $u$ as a factor.

\begin{Thm}\label{t:cr}
  Let $\sigma$ be a set of binary implicit operations on the alphabet
  $X=\{x,y\}$ in which every element satisfies one of the
  following properties:
  \begin{enumerate}
  \item\label{item:cr-1} either $x^3$ or $y^3$ is a suffix of~$w$;
  \item\label{item:cr-2} each factor of length 4 of $w$ within bounded
    distance from the end has two disjoint occurrences in~$w$.
  \end{enumerate}
  Suppose further that the pseudovariety $\pv H_\sigma$ has infinite
  exponent. Then, no pseudovariety in the
  interval $[\pv{H}_\sigma\cap\pv{Ab},\pv{CR}(\pv{H}_\sigma)]$ is
  $\sigma\cup\{\omega\}$-reducible.
\end{Thm}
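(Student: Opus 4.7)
The plan is to adapt the proof of Theorem~\ref{t:groups} to the completely regular setting. First I would choose a suitable regular language $L$ over $A=\{a,b\}$, let $S$ be its syntactic semigroup, pick $s,t\in S$, and define $\hat\varphi:\Om XS\to S$ by $\hat\varphi(x)=[a]$ and $\hat\varphi(y)=[b]$. The idea is to arrange that $\hat\varphi^{-1}(s)$ forces any pseudoword $u$ with $\hat\varphi(u)=s$ to have the shape $x^\alpha y^\beta x^2$ with $\alpha,\beta\in\hat{\mathbb N}\setminus\{0\}$, just as in the group case. Unlike the group setting, however, the containment $\pv{SL}\subseteq\pv{CR}(\pv H_\sigma)$ forces any $\pv{CR}(\pv H_\sigma)$-witness $(u_0,v_0)$ to satisfy $\mathrm{cont}(u_0)=\mathrm{cont}(v_0)$; so $L$ must be designed so that both $s$ and $t$ admit preimages of matching content, and a $\pv{CR}(\pv H_\sigma)$-pseudoidentity relating them can be built from the defining identity $x^{\omega+1}=x$ combined with identities of $\pv H_\sigma$ that hold inside the subgroups of CR semigroups. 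The witness itself should involve operations outside $\sigma\cup\{\omega\}$, so that a reducing witness is forbidden \emph{a priori} by construction.

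Having set up the witness, the heart of the argument is to deny the existence of any $\sigma\cup\{\omega\}$-word solution $(u,v)$. Imitating the group proof, write $u=u_0\psi(w)u_1$ with $u_1$ a word, $u_0$ a shorter $\sigma\cup\{\omega\}$-word, $w\in\sigma\cup\{x^\omega\}$, and $\psi$ a continuous endomorphism of $\Om XS$. When $w=x^\omega$ or $w\in\sigma$ satisfies the suffix condition~(1), the group-proof argument applies almost verbatim: the absence of $yxy$ as a subword of $u=x^\alpha y^\beta x^2$ forces $\psi(z)=x^\gamma$ where $z$ is the last letter of $w$, and then the cube $z^3$ in the suffix of $w$ gives $x^3$ as a suffix of $u$, contradicting its shape.

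The novelty is condition~(2). If $w\in\sigma$ satisfies it, then each length-$4$ factor of $w$ lying in some finite suffix has two disjoint occurrences in $w$. Pushing this through $\psi$, the pseudoword $\psi(f)$ associated with any such length-$4$ factor $f$ has two disjoint occurrences in $\psi(w)$, and hence (since $|u_1|\le 1$) two disjoint occurrences near the end of $u$. The constrained shape $x^\alpha y^\beta x^2$ has certain length-$4$ factors near its end, such as $y^3x$ or $y^2x^2$, that occur only once in $u$, producing the contradiction. The remaining subcase $|u_1|\ge 2$ should again, after identification $x\leftrightarrow y$, reduce the pseudoidentity $u=v$ to an identity of the form $x^{k+2}=x$ in $\pv H_\sigma$, contradicting its infinite exponent.

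The main obstacle I expect is the design of $L$: it must simultaneously be restrictive enough to pin down the shape of $u$ and to exhibit length-$4$ factors near its end that occur only once, yet permissive enough to admit a $\pv{CR}(\pv H_\sigma)$-witness with matching content on both sides. A secondary care is needed in translating the ``within bounded distance from the end'' clause of~(2) through the continuous endomorphism $\psi$, since finite suffixes of $w$ correspond to (generally longer) finite suffixes of $\psi(w)$ and the object under discussion is the pseudoword $\psi(f)$ rather than a literal length-$4$ word; making this bookkeeping rigorous is the step I expect to be the most delicate.
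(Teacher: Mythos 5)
You have correctly identified the overall strategy (imitate Theorem~\ref{t:groups}) and, importantly, the key new obstruction: since $\pv{CR}(\pv H_\sigma)$ contains the semilattices, any valid witness pseudoidentity must have equal content on both sides, so the group witness $(x^{\omega-1}y^\omega x^2,x)$ cannot be reused. But you then leave exactly this point unresolved --- you explicitly defer ``the design of $L$'' as the main expected obstacle --- and that design is the actual content of the proof. Worse, the shape you propose to force on $u$, namely $x^\alpha y^\beta x^2$, is incompatible with the constraint you yourself raised: if $u$ has that shape then $v$ must also have content $\{x,y\}$, and no $\pv{CR}$-valid pseudoidentity of the form $x^\alpha y^\beta x^2=v$ with such a $v$ is exhibited (nor is it clear one exists that is realizable in a suitable syntactic semigroup). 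The paper resolves this by working with the blocks $a^2b$ and $ab^2$ in place of single letters: it takes $L=(a^2b)^2(a^2b)^+(ab^2)^+(a^2b)^2$, sets $t=[a^2b]$, and uses the witness $\bigl((x^2y)^{\omega-1}(xy^2)^\omega(x^2y)^2,\;x^2y\bigr)$, whose validity in $\pv{CR}$ is itself a small but nontrivial Green's-relations argument ($qp$ and $q^2p$ are $\mathcal L$-equivalent, so $(q^2p)^\omega$ is right neutral for $qp$). Without this construction your argument does not get off the ground.

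The downstream case analysis also changes once the correct $L$ is in place, in ways your sketch does not anticipate. The forced shape of $u$ becomes $(x^2y)^\alpha(xy^2)^\beta(x^2y)^2$, the relevant threshold is $|u_1|\ge 4$ rather than $|u_1|\ge 2$, and both conditions \eqref{item:cr-1} and \eqref{item:cr-2} are refuted through the single combinatorial fact that $y^2x^2$ occurs exactly once as a factor of $u$ (your ``absence of $yxy$'' argument from the group case does not transfer, since here $v=x^2y$ rather than $x$, and the final contradiction is an identity $x^{k+4}=x^3$ over $\pv H_\sigma\cap\pv{Ab}$, not $x^{k+2}=x$). Your treatment of condition~\eqref{item:cr-2} is in the right spirit --- push disjoint occurrences of a length-4 factor through $\psi$ and contradict uniqueness of an end-factor of $u$ --- and your caution about transporting ``within bounded distance from the end'' through $\psi$ is well placed; but as written the proposal is a plan with its central construction missing rather than a proof.
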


\begin{proof} 
  Let \pv V be an arbitrary pseudovariety from the interval $[\pv
  H_\sigma\cap\pv{Ab},\pv{CR}(\pv H_\sigma)]$. We claim that the
  pseudoidentity $(x^2y)^{\omega-1}(xy^2)^\omega(x^2y)^2=x^2y$ is
  valid in~$\pv{CR}$. To prove the claim
  without invoking the general solution of the $(\omega-1)$-word
  problem for~\pv{CR} \cite{Kadourek&Polak:1986,
    Almeida&Trotter:2001},
  let $T$ be a finite
  completely regular semigroup and $\varphi: \Om XS \rightarrow T$ be
  a continuous homomorphism. We let $p=\varphi(x)$ and
  $q=\varphi(y)$. Note that the elements $qp$ and $q^2p$ are $\mathcal
  L$-equivalent. Therefore, $(q^2p)^\omega$, an idempotent $\mathcal
  L$-equivalent to $qp$, is right neutral to $qp$. Thus we obtain $qp
  (q^2p)^\omega=qp$, where the left hand side can be written as $q
  (pq^2)^\omega p$. It follows that
  $(p^2q)^{\omega-1}(pq^2)^\omega(p^2q)^2=(p^2q)^{\omega-1} (p^2q)^2$,
  where the right hand side is equal to $p^2q$ in the completely
  regular semigroup $T$. Finally, since the pseudoidentity is valid in
  $\pv{CR}$, it is valid also in~\pv V.
  
  We proceed in a similar way as in the proof of
  Theorem~\ref{t:groups}. Let $S$ be the syntactic semigroup of the
  language $L=(a^2b)^2(a^2b)^+(ab^2)^+(a^2b)^2$ over the alphabet
  $A=\{a,b\}$. Note that each word $w$ from the language $L$ has a
  unique occurrence of the factor $b^2a^2$. Hence, if two words from
  $L$ overlap, then one is a suffix of the other. Therefore, for each
  $w\in L$, a pair $(p,q)\in A^*\times A^*$ is a context of $w$, if
  and only if $p\in (a^2b)^*$ and $q$ is the empty word. This means
  that $L$ forms one syntactic class. Since a pair
  $((a^2b)^3(ab^2)(a^2b),1)$ is a context of the word $a^2b$ and it is
  not a context of any other word, we get $[a^2b]=\{a^2b\}$. One can
  also check that $[a^2b]^4=[a^2b]^3$ and $[ab^2]^2=[ab^2]$.
  
  For $X=\{x,y\}$, we consider an onto continuous homomorphism
  $\hat\varphi : \Om{X}{S} \rightarrow S$, which is the extension of
  $\varphi: X^* \rightarrow S$ uniquely given by $\varphi(x)=[a]$ and
  $\varphi(y)=[b]$. We put
  \[s=\hat\varphi((x^2y)^{\omega-1}(xy^2)^\omega(x^2y)^2)
  =[a^2b]^{\omega-1}[ab^2]^{\omega}[a^2b]^2=[(a^2b)^3(ab^2)(a^2b)^2]\]
  and $t=\hat\varphi(x^2y)=[a^2b]$. Assume that there are
  $\sigma\cup\{\omega\}$-words $u,v\in
  \Omega^{\sigma\cup\{\omega\}}_X\pv S$ such that $\hat\varphi(u)=s$,
  $\hat\varphi(v)=t$ and $\pv V\models u=v$.

  We have $t=[a^2b]=\{a^2b\}$, which implies
  $(\hat\varphi)^{-1}(t)=\{x^2y\}$. So, we have $v=x^2y$. Furthermore,
  we see that
  $$\varphi^{-1}(s)=\{(x^2y)^m(xy^2)^n(x^2y)^2\mid m\ge 3, n\ge 1 \}\
  ,$$
  because $s=[(a^2b)^3(ab^2)(a^2b)^2]=L$. The pseudoword $u$ must be a
  limit of words from the set $\varphi^{-1}(s)$. Therefore, $u$ is of
  the form $(x^2y)^\alpha (xy^2)^\beta (x^2y)^2$, where
  $\alpha,\beta\in\hat{\mathbb N}$.

  Consider an expression of~$u$ as a $\sigma\cup\{\omega\}$-word.
  Since $u$~is not a word, such an expression must be of the form
  $u=u_0\psi(w)u_1$, where $u_0$ is another
  $\sigma\cup\{\omega\}$-word, $u_1\in X^*$ is a word,
  $w\in\sigma\cup\{x^\omega\}$, and $\psi$ is a continuous
  endomorphism of~\Om XS. We claim that $|u_1|\ge 4$, in which case we
  are able to proceed as in the proof of Theorem~\ref{t:groups}.

  Assume for a moment, that $|u_1|< 4$. In other words, we have
  $u_1\in\{1, y, xy, x^2y\}$. First, we discuss the case when $w$
  satisfies condition~(\ref{item:cr-1}). Let $z\in X$ be the last
  letter of~$w$, so that $z^3$ is a suffix of $w$. We can see that
  $|\psi(z)|\ge 2$, because $u$ does not contain a cube of a letter as
  a factor. In case $|\psi(z)|=2$, one can easily check that none of
  the possible alternatives $\psi(z)=x^2$, $\psi(z)=xy$, $\psi(z)=yx$
  or $\psi(z)=y^2$ can hold as $\psi(z)\psi(z)u_1$ is a suffix of
  $yx^2yx^2y$. Thus, we have $|\psi(z)|\ge 3$ and, therefore,
  $|\psi(z^3)u_1|\ge 9$. This means that $(\psi(z))^3$ contains as a
  factor the word $y^2x^2$. Since this factor has length~4 and the
  length of $\psi(z)$ is at least 3, we deduce that $y^2x^2$ is even a
  factor of $\psi(z)^2$. However, in such a case the factor $y^2x^2$
  has at least two disjoint occurrences in $\psi(z)^3$, which is not
  possible, as $u$ contains just one occurrence of the factor
  $y^2x^2$.
  
  Now, assume that $w$ satisfies condition~(\ref{item:cr-2}) and
  recall that $w$ is not a word. As in the first case, under the
  assumption that $|u_1|<4$, one can show that the word $y^2x^2$ is a
  factor of $\psi(w)$. This means that it is a factor of some
  $\psi(w')$ where $w'$ is a factor of $w$ of length~$4$ within
  bounded distance from the end. Since $w$ satisfies
  condition~(\ref{item:cr-2}), we deduce that $\psi(w)$ contains
  another disjoint occurrence of $y^2x^2$, which is a contradiction.
  This completes the proof of the claim that $|u_1|\ge 4$.
  
  We can reformulate the previous claim as follows. The pseudoword $u$
  is a product of a certain $\sigma\cup\{\omega\}$-word $u'$ of the
  form $(x^2y)^\alpha (xy^2)^\beta x^2$ and the finite word $yx^2y$.
  Next, we consider the continuous homomorphism $\varphi :\Om XS
  \rightarrow \Om XS$ given by $\varphi(x)=\varphi(y)=x$. Since $\pv V
  \models u=v$, we obtain $\pv V \models \varphi(u)=\varphi(v)$. Since
  $\pv{H}_\sigma\cap\pv{Ab} \subseteq \pv V \subseteq
  \pv{CR}(\pv{H}_\sigma)$, the pseudoidentity $\varphi(u)=\varphi(v)$
  must be valid in $\pv{H}_\sigma\cap\pv{Ab}$. The prefix of the left
  hand side, corresponding to the prefix $\varphi(u')$ of
  $\varphi(u)$, is equivalent over $\pv{H}_\sigma\cap\pv{Ab}$ to $x^k$
  for some non-negative integer $k$. This means that, over
  $\pv{H}_\sigma\cap\pv{Ab}$, the pseudoidentity
  $\varphi(u)=\varphi(v)$ is equivalent to a certain identity
  $x^kx^4=x^3$ for some non-negative integer $k$. However such
  pseudoidentity is not valid in $\pv{H}_\sigma\cap \pv{Ab}$, because
  $\pv{H}_\sigma\cap \pv{Ab}$ and $\pv{H}_\sigma$ satisfy the same
  unary pseudoidentities and the pseudovariety $\pv{H}_\sigma$ has
  infinite exponent, a contradiction.
  
  This means that the equation $x=y$ does not have a $\pv V$-solution
  for the triple $(S,s,t)$ consisting of $\sigma\cup\{\omega\}$-words.
  On the other hand, we saw that the equation $x=y$ has a $\pv
  V$-solution consisting of $(\omega-1)$-words for the same triple
  $(S,s,t)$. We conclude that $\pv V$ is not
  $\sigma\cup\{\omega\}$-reducible.
\end{proof}

Note that Theorem~\ref{t:cr} has some overlap with
Theorem~\ref{t:groups} but does not quite supersede it. We have not
succeeded in finding an analog of Corollary~\ref{c:groups} for
Theorem~\ref{t:cr}.

Examples of application of Theorem~\ref{t:cr} include \pv{CR},
$\pv{CR}(\pv G_p)$, and $\pv{CR}(\pv G_{\mathrm{nil}})$, where
$\sigma=\{u\}$, respectively with $u=x^\omega$, $u=x^{p^\omega}$, and
$u=[x,_\omega y]$.

Another example is obtained by taking
$u=\mu^\omega(x)=\lim_{n\to\infty}\mu^{n!}(x)$, where $\mu$~is the
Prouhet-Thue-Morse substitution, defined as the endomorphism of
$\{x,y\}^+$ such that $\mu(x)=xy$ and $\mu(y)=yx$. The length of the
word $\mu^n(x)$ is~$2^n$. Hence, by identification of the variables
$x$ and $y$, we conclude that $\pv H_u\subseteq\pv G_2$. The reverse
inclusion is a particular case of a general result,
namely~\cite[Proposition~5.6]{Almeida:2001b}. It is well know that
each $\mu^n(x)$ is a cube-free word, in the sense that no nonempty
factor is a cube \cite{Lothaire:1983}. Hence, the same is true of the
pseudoword~$u$. On the other hand, $u$ is a regular element of the
semigroup \Om AS, which entails that it satisfies
condition~\eqref{item:cr-2} of Theorem~\ref{t:cr}. Thus, $\pv{CR}(\pv
G_2)$ is neither $\{u,\omega\}$ nor $\{2^\omega,\omega\}$-reducible.

\section*{Acknowledgments}

The first author acknowledges partial funding by CMUP
(UID/MAT/ 00144/2013) which is funded by FCT (Portugal) with national
(MCTES) and European structural funds (FEDER) under the partnership
agreement PT2020. %
The second author was supported by Grant 15-02862S of the Grant Agency
of the Czech Republic.

\bibliographystyle{amsplain}
\bibliography{sgpabb,ref-sgps}

\end{document}